\newtheorem{thm}{Theorem}[section]
\newtheorem{defi}{Definition}[section]
\newtheorem{cor}{Corollary}[section]
\theoremstyle{definition}
\newtheorem{rem}{Remark}[section]
\newcommand{\be}{\begin{equation}}
\newcommand{\ee}{\end{equation}}
\newcommand{\bea}{\begin{eqnarray}}
\newcommand{\eea}{\end{eqnarray}}
\newcommand{\beb}{\begin{eqnarray*}}
\newcommand{\eeb}{\end{eqnarray*}}
\numberwithin{equation}{section}
\begin{document}
%
\title[Curvature properties of the Vaidya metric]{\bf{Curvature properties of the Vaidya metric}}
\author[A. A. Shaikh, H. Kundu and J. Sen]{Absos Ali Shaikh$^1$, Haradhan Kundu$^1$ and Jayashree Sen$^1$}
\date{\today}
\address{A. A. Shaikh, H. Kundu and J. Sen,
\newline Department of Mathematics,
\newline University of Burdwan, Golapbag,
\newline Burdwan-713104,
\newline West Bengal, India}
\email{aask2003@yahoo.co.in, aashaikh@math.buruniv.ac.in}
\email{kundu.haradhan@gmail.com}
\email{jayashreesen92@gmail.com}
\dedicatory{Dedicated to Professor Prahalad Chunnilal Vaidya on his 100$^{\mbox{th}}$ birthday}
\begin{abstract}
As a generalization of the Schwarzschild solution, Vaidya presented a radiating metric to develop a model of the exterior of a star including its radiation field, named later Vaidya metric. The present paper deals with the investigation on the curvature properties of Vaidya metric. It is shown that Vaidya metric can be considered as a model of different pseudosymmetric type curvature conditions, namely, $C\cdot C = \frac{m}{r^3} Q(g, C)$, $R\cdot R - Q(S,R) = \frac{m}{r^3} Q(g,C)$ etc. It is also shown that Vaidya metric is Ricci simple, vanishing scalar curvature and its Ricci tensor is Riemann-compatible. As a special case of the main result, we obtain the curvature properties of Schwarzschild metric. Finally, we compare the curvature properties of Vaidya metric with another radiating metric, namely, Ludwig-Edgar pure radiation metric.
\end{abstract}
%
\subjclass[2010]{53B20, 53B25, 53B30, 53B50, 53C15, 53C25, 53C35, 83C15}
\keywords{Einstein Field equations, Schwarzschild metric, Vaidya metric, Ludwig-Edgar pure radiation metric, quasi-Einstein manifold, Einstein manifold, Ricci flat manifold, Ricci-simple manifold, pseudosymmetry type curvature condition}
\textbf{Published in:} Indian Journal of Mathematics, Vol. 61, No. 1, 2019, 41-59\\

\maketitle
%

\section{\bf Introduction}\label{intro}
Let $M$ be a semi-Riemannian manifold of dimension $n$ endowed with a semi-Riemannian metric $g$ and let $\nabla$, $R$, $S$ and $\kappa $ be respectively the Levi-Civita connection, the Riemann-Christoffel curvature tensor, the Ricci tensor and the scalar curvature of $M$. The additional restriction(s) on the curvature tensor(s) of a specific manifold help us to realize the geometry of that manifold. For example, if on a manifold $R = 0$ (resp., $\nabla R = 0$) then it is flat (resp., locally symmetric). Hence it is very important to investigate the curvature restricted geometric structures on a manifold.\\
\indent In the literature of differential geometry we find various generalizations of locally symmetric manifolds (\cite{Cart26}, \cite{Cart27}) in several directions, such as recurrent manifold by Ruse (\cite{Ru46}, \cite{Ru49}, \cite{Ru49a}, \cite{Wa50}), some generalized recurrent manifolds by Shaikh and his coauthors (\cite{Du79}, \cite{SR10}, \cite{SP10}, \cite{SR11}, \cite{SAR13}, \cite{SRK16}), semisymmetric manifolds by Cartan \cite{Cart46} and Sinyukov (\cite{Siny54}, \cite{Siny79}) (which were latter classified by Szab$\acute{\mbox{o}}$ (\cite{Szab82}, \cite{Szab84}, \cite{Szab85})), generalized semisymmetric manifold by Mike\v{s} (\cite{MIKES76}, \cite{Mike88}, \cite{Mike92}, \cite{MIKES96}, \cite{MSV15}, \cite{MVH09}), pseudosymmetric manifolds by Chaki \cite{Chak87}, pseudosymmetric manifolds by Deszcz (\cite{AD83}, \cite{Desz92}, \cite{DGr87}, \cite{DHV08}, \cite{HV07}, \cite{HV09}, \cite{LV94}, \cite{LV16}, \cite{LV17}), weakly symmetric manifolds by Tam$\acute{\mbox{a}}$ssy and Binh (\cite{TB89}, \cite{TB93}), manifolds of recurrent curvature 2-forms (\cite{Bess87}, \cite{LR89}) etc.\\
\indent In his theory of general relativity, Einstein made a bridge between the geometrical and physical quantities of a spacetime (a connected 4-dimensional Lorentzian manifold) by presenting the famous field equation
$$S - \frac{\kappa}{2}g + \Lambda g = \frac{8 \pi G}{c^4} T,$$
where $\Lambda$ is the cosmological constant, $G$  is Newton's gravitational constant, $c$ is the speed of light in vacuum and $T$ is the energy-momentum tensor. For instance, a spacetime represent perfect fluid if and only if it is quasi-Einstein, i.e., its Ricci tensor satisfies the condition $S = \alpha g + \beta \Pi\otimes\Pi$, where $\alpha, \beta$ are some scalars and $\Pi$ is a 1-form (\cite{Chen17}, \cite{Chen17-KJM}, \cite{DGHS98}, \cite{SKH11}). Therefore the study of curvature restricted geometric structures become essential both physically and geometrically in the investigation of a spacetime.\\
\indent According to Birkhoff's theorem \cite{Birk23}, the vacuum spherically symmetric spacetime with zero cosmological constant is the asymptotically flat solution given by Schwarzschild \cite{GP09}
\be\label{sm}
ds^2=-\left(1-\frac{2m}{r}\right)du^2-2dr du + r^2 (d\theta^2+\sin^2\theta d\phi^2),
\ee
where $m$ is an arbitrary parameter. To develop a model of the exterior of a star, which includes its radiation field, there arose an important generalization of the Schwarzschild solution by Vaidya \cite{Vaid43}. In terms of $(r, t, \theta, \phi)$-coordinates Vaidya metric \cite{Vaid43} can be written as
$$
ds^2=\left(1-\frac{2m}{r}\right)^{-1}\left(dr^2 - \frac{(m_t)^2}{(m_r)^2} dt^2\right) + r^2(d\theta^2+\sin^2\theta d\phi^2),
$$
where $m = m(r, t)$. In fact, the above metric can be expressed in a much more useful form (for outgoing radiation) by introducing a null coordinate $u$, as follows:
\be\label{vm}
ds^2=-\left(1-\frac{2m(u)}{r}\right)du^2 - 2drdu + r^2(d\theta^2+\sin^2\theta d\phi^2).
\ee
If $m(u)=constant$, then the Vaidya metric \eqref{vm} reduces to the Schwarzschild metric \eqref{sm}. Hence \eqref{vm} is a generalization of the Schwarzschild metric \eqref{sm}. In this case, $m(u)$ is an arbitrary non-increasing function of the retarded null coordinate $u$.\\
\indent It is known that the Schwarzschild metric is a pseudosymmetric metric \cite{1991-DVV} (Proposition 2). The Schwarzschild metric, the Kottler metric, the Reissner-Nordstr\"{o}m metric, as well as some Friedmann-Lema\^{i}tre-Robertson-Walker metrics are the ``oldest'' examples of pseudosymmetric non-semisymmetric warped product metrics (see, e.g., \cite{DGJZ-2016}, Section 1, \cite{DHV08}, Chapter 6). In \cite{Kowa06} (Section 5) it was stated that the Vaidya metric $g$ is a non-pseudosymmetric metric, rank $S \leq 1$ and the tensors $R \cdot R - Q(S,R)$ and $Q(g,C)$ are linearly dependent. Further, in view of Theorem 2 of \cite{D-1991}, the tensors $C \cdot C$ and $Q(g,C)$ of the metric $g$ are linearly dependent. These properties of $g$, in details, are presented in Example 7.1 (ii)-(iii) of \cite{DGJZ-2016} (eq. (7.14)). In addition, in that example it was mention that $g$ satisfies also the following conditions: $S^2 = 0$, $\kappa= 0$ and $S \cdot R = 2mr^{-3} g \wedge S$.\\
\indent The main object of the present paper is to investigate the geometric structures admitted by the Vaidya metric \eqref{vm}. It is interesting to note that such a metric admits several important geometric structures, such as, it is Ricci simple, manifold of vanishing scalar curvature, its Ricci tensor is Riemann-compatible and its conformal curvature 2-forms are recurrent. Moreover Vaidya metric satisfies various pseudosymmetric type curvature conditions, such as pseudosymmetric Weyl conformal curvature tensor, the difference tensor $R\cdot R - Q(S,R)$ is linearly dependent with $Q(g,C)$. As a special case of the main result, we obtain the curvature properties of Schwarzschild metric. It is shown that Schwarzschild metric is pseudosymmetric in the sense of Deszcz.\\
\indent The paper is organized as follows. Section 2 deals with the different geometric structures due to various curvature tensors. Section 3 is concerned with the calculations of components of various tensors of Vaidya spacetime. Section 4 is devoted to the conclusion on the geometric structures admitted by Vaidya metric and Schwarzschild metric. Finally, in section 5, we compare the curvature properties of Vaidya metric with Ludwig-Edgar pure radiation metric.
\section{\bf Curvature Restricted Geometric Structures}
A curvature restricted geometric structure on a semi-Riemannian manifold $M$ is a geometric structure obtained by imposing some restriction(s) on some curvature tensor(s) of $M$ by means of covariant derivatives of first order or higher orders. In this section we will explain some useful notations and definitions of various curvature restricted geometric structures.\\
\indent Let $A$ and $E$ be two symmetric $(0,2)$-tensors. The Kulkarni-Nomizu product $A\wedge E$ of $A$ and $E$ is defined by (see e.g. \cite{DGHS11}, \cite{Glog02})
\begin{eqnarray*}
(A\wedge E)(X_1,X_2,X_3,X_4) &=& A(X_1,X_4)E(X_2,X_3) + A(X_2,X_3)E(X_1,X_4)\\
&-& A(X_1,X_3)E(X_2,X_4) - A(X_2,X_4)E(X_1,X_3),
\end{eqnarray*}
where $X_1,X_2,X_3,X_4 \in \chi(M)$, the Lie algebra of all smooth vector fields on $M$. Throughout the paper we will consider $X, Y, X_1, X_2, \cdots \in \chi(M)$. Again for a symmetric $(0,2)$-tensor $E$, we can define two endomorphisms $\mathcal E$ and $(X\wedge_E Y)$ as (\cite{DDHKS00}, \cite{DGHS11})
$$g(\mathcal E X_1, X_2) = E(X_1, X_2) \ \mbox{ and } \ (X\wedge_E Y)X_1 = E(Y,X_1)X-E(X,X_1)Y.$$
Now in terms of Kulkarni-Nomizu product $\wedge$ and the endomorphism $(X\wedge_E Y)$, the Gaussian curvature tensor $\mathfrak G$, the projective curvature tensor $P$, the conharmonic curvature tensor $K$, the concircular curvature tensor $W$ and the conformal curvature tensor $C$ are given by (\cite{Ishi57}, \cite{YK89})
$$\mathfrak G = \wedge_g, \ \ \ \ P = R - \frac{1}{n-1}(\wedge_S), \ \ \ \ K = R-\frac{1}{n-2}(g\wedge S),$$
$$W = R-\frac{r}{n(n-1)}\mathfrak G, \ \ \ \ C = K + \frac{r}{(n-2)(n-1)}\mathfrak G.$$
\indent For a $(0,4)$-tensor $D$ we can define the corresponding $(1,3)$-tensor $\mathcal D$ and the endomorphism $\mathscr{D}(X,Y)$ as follows:
$$D(X_1,X_2,X_3,X_4) = g(\mathcal D(X_1,X_2)X_3, X_4), \ \ \mathscr{D}(X,Y)X_1 = \mathcal D(X,Y)X_1.$$
\indent Now operating $\mathscr{D}(X,Y)$ and $(X\wedge_A Y)$ on a $(0,k)$-tensor $H$, $k\geq 1$, we get two $(0,k+2)$-tensors $D\cdot H$ and $Q(A,H)$ respectively as follows (see \cite{DG02}, \cite{DGHS98}, \cite{DH03}, \cite{SDHJK15}, \cite{SK14}, \cite{Tach74} and also references therein)
$$D\cdot H(X_1,X_2,\cdots,X_k,X,Y) = -H(\mathcal D(X,Y)X_1,X_2,\cdots,X_k) - \cdots - H(X_1,X_2,\cdots,\mathcal D(X,Y)X_k).$$
and
\beb
&&Q(A,H)(X_1,X_2, \ldots ,X_k,X,Y) = ((X \wedge_A Y)\cdot H)(X_1,X_2, \ldots ,X_k)\\
&&= A(X, X_1) H(Y,X_2,\cdots,X_k) + \cdots + A(X, X_k) H(X_1,X_2,\cdots,Y)\\
&& - A(Y, X_1) H(X,X_2,\cdots,X_k) - \cdots - A(Y, X_k) H(X_1,X_2,\cdots,X).
\eeb
\begin{defi}$($\cite{AD83}, \cite{Cart46}, \cite{Desz92}, \cite{HV04}, \cite{SK14}, \cite{SKppsn}, \cite{SKppsnw}, \cite{SRK15}, \cite{Szab82}$)$
A semi-Riemannian manifold $M$ is said to be $H$-semisymmetric type if $D\cdot H = 0$ and it is said to be $H$- pseudosymmetric type if $\left(\sum\limits_{i=1}^k c_i D_i\right)\cdot H = 0$ for some scalars $c_i$'s, where $D$ and each $D_i$, $i=1,\ldots, k$, $(k\ge 2)$, are (0,4) curvature tensors.
\end{defi}
\indent In particular, if $D = R$ and $H=R$ (resp., $S$, $C$, $W$ and $K$), then $M$ is called semisymmetric (resp., Ricci, conformally, concircularly and conharmonically semisymmetric). Again, if $i =2$, $D_1 = R$, $D_2 = \mathfrak G$ and $H= R$ (resp., $S$, $C$, $W$ and $K$), then $M$ is called Deszcz pseudosymmetric (resp., Ricci, conformally, concircularly and conharmonically pseudosymmetric). Especially, if $i =2$, $D_1 = C$, $D_2 = \mathfrak G$ and $H =C$, then $M$ is called a manifold of pseudosymmetric Weyl conformal curvature tensor.\\
\indent We note that the pp-wave metric \cite{SBK17} is semisymmetric and the Robinson-Trautman metric \cite{SAA17} is pseudosymmetric.
\begin{defi} 
A semi-Riemannian manifold $M$ is said to be quasi-Einstein if $S = \alpha g + \beta \Pi \otimes \Pi$ for some scalars $\alpha$ and $\beta$, and a 1-form $\Pi$. In particular, if $\beta\equiv 0$ (resp., $\alpha\equiv 0$), then a quasi-Einstein manifold is called Einstein \cite{Bess87} (resp., Ricci simple \cite{Chen17}).
\end{defi}
We mention that the Robertson-Walker spacetimes are quasi Einstein \cite{ADEHM14}, Kaigorodov metric \cite{Kaig63} is Einstein, and G\"{o}del spacetime \cite{DHJKS14} is Ricci simple. Quasi-Einstein manifolds were investigated in several papers, see, e.g., \cite{DDHKS00}, \cite{DGHS98}, \cite{DGHS11}, \cite{DGJZ-2016}, \cite{DH03}, \cite{SDHJK15}.\\
\indent In 1983, Derdzinski and Shen \cite{DS83} showed that if $E$ is a Codazzi tensor \cite{Gray78} such that $V_{\lambda}$ and $V_{\mu}$ are two eigenspaces corresponding to the eigenvalues $\lambda$ and $\mu$ of the operator $\mathcal E$, then the subspace $V_{\lambda} \wedge V_{\mu}$ is invariant under the action of the curvature operator $\mathscr R(X,Y)$. Recently, Mantica and Molinari \cite{MM12a} extended  their theorem by replacing the condition `Codazzi type' by `Riemann-compatibility'.
\begin{defi}\label{def2.8}
Let $D$ be a $(0,4)$-tensor and $E$ be a symmetric $(0, 2)$-tensor on $M$. Then $E$ is said to be $D$-compatible (\cite{DGJPZ13}, \cite{MM12b}, \cite{MM13}) if
\[
D(\mathcal E X_1, X,X_2,X_3) + D(\mathcal E X_2, X,X_3,X_1) + D(\mathcal E X_3, X,X_1,X_2) = 0
\]
holds, where $\mathcal E$ is the endomorphism corresponding to $E$. Again a 1-form $\Pi$ is said to be $D$-compatible if $\Pi\otimes \Pi$ is $D$-compatible.
\end{defi}
We note that the Ricci tensor of G\"{o}del metric \cite{DHJKS14} and Som-Raychaudhuri metric \cite{SK16srs} are Riemann-compatible.\\
\indent The curvature 2-forms $\Omega_{(D)l}^m$ (\cite{Bess87}, \cite{LR89}) associated to a  $(0,4)$-curvature tensor $D$ and the 1-forms $\Lambda_{(Z)l}$ \cite{SKP03} associated to a symmetric (0, 2)-curvature tensor $Z$ are defined as follows:
$$\Omega_{(D)l}^m= D_{jkl}^m dx^j \wedge dx^k \mbox{ and } \Lambda_{(Z)l} = Z_{lm} dx^m,$$
where $\wedge$ indicates the exterior product. Now $\Omega_{(D)l}^m$ (resp., $\Lambda_{(Z)l}$) is recurrent if
$$\mathfrak D \Omega_{(D)l}^m  = \Pi \wedge \Omega_{(D)l}^m \mbox{ (resp., } \mathfrak D \Lambda_{(Z)l}  = \Pi \wedge \Lambda_{(Z)l}),$$
where $\mathfrak D$ is the exterior derivative and $\Pi$ is the associated $1$-form. Recently Mantica and Suh (\cite{MS12a}, \cite{MS13a}, \cite{MS14}) showed that $\Omega_{(D)l}^m$ are recurrent if and only if
\bea\label{man}
&&(\nabla_{X_1} D)(X_2,X_3,X,Y)+(\nabla_{X_2} D)(X_3,X_1,X,Y)+(\nabla_{X_3} D)(X_1,X_2,X,Y) =\\\nonumber
&&\hspace{1in} \Pi(X_1) D(X_2,X_3,X,Y) + \Pi(X_2) D(X_3,X_1,X,Y)+ \Pi(X_3) D(X_1,X_2,X,Y)
\eea
and $\Lambda_{(Z)l}$ are recurrent if and only if
\be\label{mans}
(\nabla_{X_1} Z)(X_2,X) - (\nabla_{X_2} Z)(X_1,X) = \Pi(X_1) Z(X_2,X) - \Pi(X_2) Z(X_1,X)
\ee
for a 1-form $\Pi$.
\section{\bf Various tensors of Vaidya metric}\label{com}
In terms of $(u, r, \theta, \phi)$-coordinates ($r>0$), the Vaidya metric \eqref{vm} is given by
\beb
g = \left(\begin{array}{cccc}
 -1+\frac{2m}{r} & -1 & 0 & 0 \\
 -1 & 0 & 0 & 0 \\
 0 & 0 & r^2 & 0 \\
 0 & 0 & 0 & r^2 \sin^2\theta
\end{array}\right).
\eeb
Then the non-zero components of its Riemann-Christoffel curvature tensor $R$, Ricci tensor $S$ and scalar curvature $\kappa$ of \eqref{vm} are given by
\begin{equation}\label{R}
\left.\begin{array}{c}
R_{1212}=-\frac{2 m}{r^3}, \ \ R_{1313}=-\frac{2 m^2+r^2 m'-m r}{r^2}, \ \ R_{1424}=\frac{m \sin ^2 \theta}{r},\\
R_{1323}=\frac{m}{r}, \ \ R_{1414}=-\frac{\left(2 m^2+r^2 m'-m r\right) \sin ^2 \theta}{r^2}, \ \ R_{3434}=2 m r \sin ^2 \theta,
\end{array}\right\}
\end{equation}

\begin{equation}\label{S} S_{11}=\frac{2 m'}{r^2} \ \mbox{and } \kappa = 0, \end{equation}
where $m= m(u)$ and $m' = \frac{dm}{du}$.
The non-zero components of its conformal curvature tensor $C$ are given by
\begin{equation}\label{C}
\left.\begin{array}{c}
C_{1212}=-\frac{2 m}{r^3}, \ \ C_{1313}=-\frac{m (2 m-r)}{r^2}, \ \ C_{1323}=\frac{m}{r},\\
C_{1414}=-\frac{m (2 m-r) \sin ^2 \theta}{r^2}, \ \ C_{1424}=\frac{m \sin ^2 \theta}{r}, \ \ C_{3434}=2 m r \sin ^2 \theta.
\end{array}\right\}
\end{equation}
now the non-zero components of its $\nabla R$, $\nabla S$ and $\nabla C$ of \eqref{vm} are given by
\begin{equation}\label{dR}
\left.\begin{array}{c}
R_{1212,1}=-\frac{2 m'}{r^3}, \ \ R_{1212,2}=\frac{6 m}{r^4}, \ \ R_{1213,3}=\frac{6 m^2+r^2 m'-3 m r}{r^3},\\
R_{1214,4}=\frac{\sin ^2(\theta ) \left(6 m^2+r^2 m'-3 m r\right)}{r^3},\ \ R_{1223,3}= R_{1323,2}=-\frac{3 m}{r^2},\\
R_{1224,4}= R_{1424,2}=-\frac{3 m \sin ^2(\theta )}{r^2},\ \ R_{1313,1}=-\frac{r^2 m''-r m'+4 m m'}{r^2},\\
R_{1313,2}=\frac{6 m^2+2 r^2 m'-3 m r}{r^3}, \ \ R_{1323,1}=\frac{m'}{r}, \ \ R_{1414,1}=-\frac{\sin ^2(\theta ) \left(r^2 m''-r m'+4 m m'\right)}{r^2},\\
-R_{1334,4}=r \sin ^2(\theta ) m', \ \ R_{1434,3}=r \sin ^2(\theta ) m', \ \ \frac{1}{2}R_{3434,1}=r \sin ^2(\theta ) m',\\
R_{1414,2}=\frac{\sin ^2(\theta ) \left(6 m^2+2 r^2 m'-3 m r\right)}{r^3}, \ \ R_{1424,1}=\frac{\sin ^2(\theta ) m'}{r},\\
R_{2334,4}= -R_{2434,3}= -\frac{1}{2}R_{3434,2}=3 m \sin ^2(\theta ),
\end{array}\right\}
\end{equation}
\begin{equation}\label{dS}
\left.\begin{array}{c}
S_{11,1}=\frac{2 \left(r^2 m''+2 m m'\right)}{r^4}, \ \ S_{11,2}=-\frac{4 m'}{r^3},\\
S_{13,3}=-\frac{2 m'}{r},\ \ S_{14,4}=-\frac{2 \sin ^2(\theta) m'}{r},
\end{array}\right\}
\end{equation}
\begin{equation}\label{dC}
\left.\begin{array}{c}
C_{1212,1}=-\frac{2 m'}{r^3}, \ \ C_{1212,2}=\frac{6 m}{r^4}, \ \ C_{1213,3}= C_{1313,2}=\frac{3 m (2 m-r)}{r^3},\\
C_{1214,4}= C_{1414,2}=\frac{3 m \sin ^2(\theta ) (2 m-r)}{r^3},\\
C_{1223,3}= C_{1323,2}=-\frac{3 m}{r^2}, \ \ C_{1224,4}= C_{1424,2}=-\frac{3 m \sin ^2(\theta )}{r^2},\\
C_{1313,1}=\frac{(r-2 m) m'}{r^2}, \ \ C_{1323,1}=\frac{m'}{r}, \ \ C_{1414,1}=\frac{\sin ^2(\theta ) (r-2 m) m'}{r^2},\\
C_{1424,1}=\frac{\sin ^2(\theta ) m'}{r}, \ \ C_{2334,4}= -C_{2434,3}= -\frac{1}{2}C_{3434,2}=3 m \sin ^2(\theta ), \ \ C_{3434,1}=2 r \sin ^2(\theta ) m'.
\end{array}\right\}
\end{equation}
Now from above we can easily evaluate the non-zero components of $R\cdot R$, $C\cdot R$, $R\cdot C$, $C\cdot C$, $Q(g,R)$, $Q(S,R)$, $Q(g,C)$ and $Q(S, C)$ as follows:
\begin{equation}\label{R.R}
\left.\begin{array}{c}
-2R\cdot R_{121313}= R\cdot R_{131312}=\frac{4 m m'}{r^3}, \ \ R\cdot R_{121323}= -R\cdot R_{122313}=\frac{3 m^2}{r^4},\\
-2R\cdot R_{121414}= R\cdot R_{141412}=\frac{4 m m' \sin^2\theta}{r^3}, \ \ R\cdot R_{121424}= -R\cdot R_{122414}=\frac{3 m^2 \sin^2\theta}{r^4},\\
-R\cdot R_{133414}= R\cdot R_{143413}=\frac{m \left(6 m^2+4 r^2 m'-3 m r\right) \sin^2\theta}{r^3},\\
R\cdot R_{133424}= -R\cdot R_{143423}= R\cdot R_{233414}= -R\cdot R_{243413}=\frac{3 m^2 \sin^2\theta}{r^2},
\end{array}\right\}
\end{equation}
\indent\\
\begin{equation}\label{R.C}
\left.\begin{array}{c}
R\cdot C_{121313}=-\frac{3 m m'}{r^3}, \ \ R\cdot C_{121323}= -R\cdot C_{122313}=\frac{3 m^2}{r^4},\\
R\cdot C_{121414}=-\frac{3 m m' \sin^2\theta}{r^3}, \ \ R\cdot C_{121424}= -R\cdot C_{122414}=\frac{3 m^2 \sin^2\theta}{r^4},\\
-R\cdot C_{133414}= R\cdot C_{143413}=\frac{3 m \left(2 m^2+r^2 m'-m r\right) \sin^2\theta}{r^3},\\
R\cdot C_{133424}= -R\cdot C_{143423}= R\cdot C_{233414}= -R\cdot C_{243413}=\frac{3 m^2 \sin^2\theta}{r^2},
\end{array}\right\}
\end{equation}
\indent\\
\begin{equation}\label{C.R}
\left.\begin{array}{c}
4C\cdot R_{121313}= C\cdot R_{131312}=\frac{4 m m'}{r^3}, \ \ C\cdot R_{121323}= -C\cdot R_{122313}=\frac{3 m^2}{r^4},\\
4C\cdot R_{121414}= C\cdot R_{141412}=\frac{4 m m' \sin^2\theta}{r^3}, \ \ C\cdot R_{121424}= -C\cdot R_{122414}=\frac{3 m^2 \sin^2\theta}{r^4},\\
-C\cdot R_{133414}= C\cdot R_{143413}=\frac{m \left(6 m^2+r^2 m'-3 m r\right) \sin^2\theta}{r^3},\\
$$C\cdot R_{133424}= -C\cdot R_{143423}= C\cdot R_{233414}= -C\cdot R_{243413}=\frac{3 m^2 \sin^2\theta}{r^2},
\end{array}\right\}
\end{equation}
\indent\\
\begin{equation}\label{C.C}
\left.\begin{array}{c}
C\cdot C_{121323}= -C\cdot C_{122313}=\frac{3 m^2}{r^4}, \ \ C\cdot C_{121424}= -C\cdot C_{122414}=\frac{3 m^2 \sin^2\theta}{r^4},\\
-C\cdot C_{133414}= C\cdot C_{143413}=\frac{3 m^2 (2 m-r) \sin^2\theta}{r^3},\\
C\cdot C_{133424}= -C\cdot C_{143423}= C\cdot C_{233414}= -C\cdot C_{243413}=\frac{3 m^2 \sin^2\theta}{r^2},
\end{array}\right\}
\end{equation}
\indent\\
\begin{equation}\label{qgr}
\left.\begin{array}{c}
2Q(g,R)_{121313}= -Q(g,R)_{131312}=2 m', \ \ Q(g,R)_{121323}= -Q(g,R)_{122313}=\frac{3 m}{r},\\
2Q(g,R)_{121414}= -Q(g,R)_{141412}=2 m' \sin^2\theta, \ \ Q(g,R)_{121424}= -Q(g,R)_{122414}=\frac{3 m \sin^2\theta}{r},\\
-Q(g,R)_{133414}= Q(g,R)_{143413}= \left(6 m^2+r^2 m'-3 m r\right) \sin^2\theta,\\
Q(g,R)_{133424}= -Q(g,R)_{143423}= Q(g,R)_{233414}= -Q(g,R)_{243413}=3 m r \sin^2\theta,
\end{array}\right\}
\end{equation}
\indent\\
\begin{equation}\label{qsr}
\left.\begin{array}{c}
-2Q(S, R)_{121313}= Q(S, R)_{131312}=\frac{4 m m'}{r^3}, \ \ -2Q(S, R)_{121414}= Q(S, R)_{141412}=\frac{4 m m' \sin^2\theta}{r^3},\\
$$-Q(S, R)_{133414}= Q(S, R)_{143413}=\frac{4 m m' \sin^2\theta}{r},
\end{array}\right\}
\end{equation}
\indent\\
\begin{equation}\label{qgc}
\left.\begin{array}{c}
Q(g, C)_{121323}= -Q(g, C)_{122313}=\frac{3 m}{r}, \ \ Q(g, C)_{121424}= -Q(g, C)_{122414}=\frac{3 m \sin^2\theta}{r},\\
-Q(g, C)_{133414}= Q(g, C)_{143413}=3 m (2 m-r) \sin^2\theta,\\
Q(g, C)_{133424}= -Q(g, C)_{143423}= Q(g, C)_{233414}= -Q(g, C)_{243413}=3 m r \sin^2\theta,
\end{array}\right\}
\end{equation}
\indent\\
\begin{equation}\label{qsc}
\left.\begin{array}{c}
-2Q(S, C)_{121313}= Q(S, C)_{131312}=\frac{4 m m'}{r^3}, \ \ -2Q(S, C)_{121414}= Q(S, C)_{141412}=\frac{4 m m' \sin^2\theta}{r^3},\\
-Q(S, C)_{133414}= Q(S, C)_{143413}=\frac{4 m m' \sin^2\theta}{r}.
\end{array}\right\}
\end{equation}

Now from Einstein's field equations with zero cosmological constant, the energy momentum tensor is given by
$$T= \frac{c^4}{8\pi G}\left(S- \frac{\kappa}{2} g\right),$$ 
where $c=$ speed of light in vacuum, $G=$ gravitational constant. Then the only non-zero component (upto symmetry) of $T$ is given by
\be\label{emt}
T_{11}= \frac{2 c^4 m'}{8 \pi  G r^2}.
\ee
Then the non-zero components of $\nabla T$ are given by
\begin{equation}\label{delt}
\left.\begin{array}{c}
T_{11,1}=\frac{c^4 \left(r^2 m''+2 m m'\right)}{4 \pi  G r^4}, \ \ T_{13,3}=-\frac{c^4 m'}{4 \pi  G r},\\
T_{11,2}=-\frac{c^4 m'}{2 \pi  G r^3}, \ \ T_{14,4}=-\frac{c^4 \sin ^2(\theta ) m'}{4 \pi  G r}.
\end{array}\right\}
\end{equation}
\section{\bf Curvature properties of Vaidya metric}\label{con}
\indent From above we can conclude that the metric \eqref{vm} fulfills the following curvature restricted geometric structures.
\begin{thm}\label{mainthm}
The Vaidya metric given in \eqref{vm} possesses the following curvature properties:
\begin{enumerate}[label=(\roman*)]
\item neither Ricci flat nor Ricci symmetric but its scalar curvature is zero and hence $R = W$ and $C = K$.
\item neither conformally semisymmetric nor Deszcz pseudosymmetric but $C\cdot C=\frac{m}{r^3}Q(g,C)$.
\item neither Ricci generalized pseudosymmetric nor Weyl pseudosymmetric but satisfies the pseudosymmetric type conditions $R\cdot R-Q(S,R)=\frac{m}{r^3}Q(g,C)$ and $R\cdot C + C\cdot R = \frac{2 m}{r^3} Q(g, C) + Q(S,C)$.
\item neither conformally recurrent nor the curvature 2-forms $\Omega_{(R)l}^m$ nor the Ricci 1-forms $\Lambda_{(S)l}$ are recurrent but the  conformal 2-forms $\Omega_{(C)l}^m$ are recurrent for the 1-form $\Pi=\left\{\frac{m'}{m},0,0,0\right\}$.
\item not Einstein but it is Ricci simple, such that $S = \beta (\eta\otimes\eta)$, where $\beta = 2m'$ and $\eta =\{\frac{1}{r},0,0,0\}$ with $||\eta|| = 0$. Hence $S\wedge S = 0$ and $S^2 = 0$.
\item Ricci tensor is neither cyclic parallel nor Codazzi type but Riemann compatible as well as Weyl compatible.
\item the general form of the compatible tensors for $R$ and $P$ are given by
$$\left(
\begin{array}{cccc}
 a_{_{11}} & a_{_{12}} & 0 & 0 \\
 a_{_{12}}+\frac{r m'}{m} a_{_{22}} & a_{_{22}} & 0 & 0 \\
 0 & 0 & a_{_{33}} & a_{_{43}} \\
 0 & 0 & a_{_{43}} & a_{_{44}}
\end{array}
\right),$$
where $a_{_{ij}}$ being arbitrary scalars.
\item the general form of the compatible tensors for $C$ and $K$ are given by
$$\left(
\begin{array}{cccc}
 a_{_{11}} & a_{_{12}} & 0 & 0 \\
 a_{_{12}} & a_{_{22}} & 0 & 0 \\
 0 & 0 & a_{_{33}} & a_{_{34}} \\
 0 & 0 & a_{_{34}} & a_{_{44}}
\end{array}
\right).$$
\end{enumerate}
\end{thm}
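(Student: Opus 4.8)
The entire theorem is a verification statement: every assertion in items (i)--(viii) follows by inserting the explicit component tensors from Section~\ref{com} into the relevant definitions from Section~2. Thus the plan is not to discover anything but to organize the bookkeeping so that each claim is checked against the correct set of components. I would proceed item by item, in the order they are stated, since later items reuse facts established earlier (e.g.\ $\kappa=0$ from (i) is used to identify $R=W$ and $C=K$, and the Ricci-simple structure from (v) feeds the compatibility computations in (vi)--(viii)).

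First, for (i): read off from \eqref{S} that $\kappa=0$ and $S_{11}=2m'/r^2\neq 0$ in general, so $M$ is not Ricci flat; compute a nonzero component of $\nabla S$ from \eqref{dS} to see it is not Ricci symmetric. Then $\kappa=0$ gives $W=R-\frac{\kappa}{n(n-1)}\mathfrak G=R$ and $C=K+\frac{\kappa}{(n-2)(n-1)}\mathfrak G=K$. For (ii): compare \eqref{C.C} with \eqref{qgc} entry-by-entry; the six independent component families of $C\cdot C$ are each exactly $\frac{m}{r^3}$ times the corresponding family of $Q(g,C)$ --- for instance $C\cdot C_{133414}=-\frac{3m^2(2m-r)\sin^2\theta}{r^3}=\frac{m}{r^3}\cdot\bigl(-3m(2m-r)\sin^2\theta\bigr)=\frac{m}{r^3}Q(g,C)_{133414}$. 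Since $C\cdot C\neq 0$ (the $1323$ component is $3m^2/r^4$) it is not conformally semisymmetric, and since $Q(g,C)$ and $C\cdot C$ are nonzero while, say, $R\cdot R\neq L\,Q(g,R)$ (compare \eqref{R.R} and \eqref{qgr}: the ratios in different components are not constant), it is not Deszcz pseudosymmetric. Item (iii) is identical in spirit: subtract \eqref{qsr} from \eqref{R.R} componentwise to get $R\cdot R-Q(S,R)$, and check it equals $\frac{m}{r^3}Q(g,C)$; then add \eqref{R.C} and \eqref{C.R}, and verify the sum equals $\frac{2m}{r^3}Q(g,C)+Q(S,C)$ using \eqref{qgc} and \eqref{qsc}. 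The non-Ricci-generalized-pseudosymmetry and non-Weyl-pseudosymmetry follow because the proportionality constants between $R\cdot R$ (resp.\ $C\cdot C$ against $R\cdot C$) are not globally constant.

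For (iv): apply the Mantica--Suh criteria \eqref{man} and \eqref{mans}. For the conformal $2$-forms one must check that the cyclic sum $(\nabla_{X_1}C)(X_2,X_3,X,Y)+\text{(cyclic)}$ equals $\Pi(X_1)C(X_2,X_3,X,Y)+\text{(cyclic)}$ with $\Pi=(m'/m,0,0,0)$; this reduces to a finite check on the components \eqref{C} and \eqref{dC}. The failures (not conformally recurrent, $\Omega_{(R)}$ and $\Lambda_{(S)}$ not recurrent) come from exhibiting components where \eqref{man}/\eqref{mans} fail for any $1$-form. For (v): \eqref{S} shows $S=\frac{2m'}{r^2}(du\otimes du)=2m'\,\eta\otimes\eta$ with $\eta=(1/r,0,0,0)$; compute $\|\eta\|^2=g^{11}/r^2=0$ since $g^{11}=0$ (the inverse metric has vanishing $uu$-entry because $g_{rr}=0$), so $\eta$ is null, whence $S^2$ has components $S_{ik}g^{kl}S_{lj}$ which vanish, and $S\wedge S=0$ follows from $S$ being a rank-one null tensor. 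Since $S\neq\frac{\kappa}{n}g=0$ it is not Einstein. For (vi): cyclic-parallel means $\sum_{\text{cyclic}}(\nabla S)=0$ and Codazzi means $(\nabla_X S)(Y,Z)=(\nabla_Y S)(X,Z)$; both fail by inspection of \eqref{dS} (e.g.\ $S_{11,2}\neq S_{12,1}=0$). Riemann-compatibility of $S$ is the statement of Definition~\ref{def2.8} with $D=R$, $E=S$: since $\mathcal S X=\frac{2m'}{r^2}g^{1k}\langle\partial_u,\cdot\rangle$-type operator sends everything into the span of the null direction $\partial_r$ (because $g^{1k}$ is nonzero only for $k=2$), the required cyclic sum $R(\mathcal S X_1,X,X_2,X_3)+\text{(cyclic)}$ collapses to terms all proportional to $R(\partial_r,\cdot,\cdot,\cdot)$, and one checks these cancel using \eqref{R}; Weyl-compatibility is the same computation with $C$ in place of $R$ using \eqref{C}.

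Finally, for (vii) and (viii): here one must solve the linear system ``$E$ is $R$-compatible'' (resp.\ $P$-, $C$-, $K$-compatible) for the general symmetric $E=(a_{ij})$. Write the compatibility condition of Definition~\ref{def2.8} out in coordinates using the components \eqref{R} (resp.\ the components of $P=R-\frac{1}{n-1}\wedge_S$, which one assembles from \eqref{R} and \eqref{S}, and \eqref{C}); this yields a homogeneous linear system in the $a_{ij}$, and Gaussian elimination produces the displayed matrices --- note the off-diagonal coupling $a_{21}=a_{12}+\frac{rm'}{m}a_{22}$ for $R$ and $P$ arises precisely from the $R_{1212}$ vs.\ $R_{1313}$ asymmetry, whereas for $C$ and $K$ (which is $C$ since $\kappa=0$) the conformal components are ``balanced'' enough that $a_{21}=a_{12}$, and the $\theta\phi$-block decouples freely in both cases. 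The main obstacle is purely organizational: item (vii)--(viii) require setting up and row-reducing a $10\times(\text{many})$ system correctly, and one must be careful that $P$-compatibility (not just $R$-compatibility) gives the \emph{same} answer --- this is where an oversight is most likely, so I would double-check by direct substitution of the general matrix back into the $P$-compatibility identity. Everything else is routine component arithmetic already tabulated in Section~\ref{com}.
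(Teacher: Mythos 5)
Your proposal is correct and follows essentially the same route as the paper's proof, which is a terse componentwise verification: each item is checked by substituting the explicit tensor components of Section~3 into the definitions of Section~2 (e.g.\ comparing \eqref{C.C} with \eqref{qgc} for (ii), exhibiting two components of $R\cdot R$ with different ratios to $Q(g,R)$ for non-pseudosymmetry, and reading \eqref{S} and \eqref{dS} for (i), (v), (vi)). Your write-up is in fact more detailed than the paper's, but there is no methodological difference.
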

\begin{proof}
\textit{(i)} From \eqref{S}, (i) is obvious.\\
\textit{(ii)} From \eqref{R.R} and \eqref{qgr}, we get $R\cdot R_{131312} = -\frac{2m}{r^3}Q(g,R)_{131312}$ but $R\cdot R_{122313} = \frac{m}{r^3}Q(g,R)_{122313}$. Hence the Vaidya metric is not pseudosymmetric. Again from \eqref{C.C} and \eqref{qgc}, we get $C\cdot C\ne 0$ but $C\cdot C=\frac{m}{r^3}Q(g,C)$.\\
\textit{(iii)} Again from \eqref{R.R}, \eqref{qsr}, \eqref{R.C}, \eqref{C.R}, \eqref{qgc} and \eqref{qsc}, one can easily check the pseusosymmetric type conditions given in (iii).\\
\textit{(iv)} Now putting the values of $\nabla C$ (given in \eqref{dC}) and $C$ (given in \eqref{C}) and $\Pi=\left\{\frac{m'}{m},0,0,0\right\}$ in \eqref{man} for $D=C$, we get (iv).\\ 
\textit{(v)} It is easily follows from \eqref{S}.\\
\textit{(vi)} From \eqref{S}, \eqref{dS} and definition of compatible tensor, we get (vi).\\
\textit{(vii)} We can easily check (vii) using components of $R$ and $P$.\\
\textit{(viii)} Again (viii) follows from definition of compatible tensor and components of $C$ and $K$.
\end{proof}
\begin{rem}
From \eqref{emt}, we see that the energy momentum tensor of the Vaidya metric \eqref{vm} can be expressed as
$$T = \frac{2 c^4 m'}{8 \pi  G r^2} (\eta\otimes\eta), \ \ \mbox{where } \eta = \{1,0,0,0\}.$$
Now $||\eta|| = 0$ and hence the Vaidya metric is a pure radiation metric \cite{Vaid43}. 
\end{rem}
\begin{rem}
From the value of the local components (presented in Section \ref{com}) of various tensors of the Vaidya metric \eqref{vm}, we can easily conclude that the metric does not fulfill the following geometric structures:\\
(i) super generalized recurrent (\cite{SKA16}, \cite{SRK16}) (ii) weakly symmetric (\cite{SK12}, \cite{TB89}) for $R$, $C$ and $P$, (iii) weakly cyclic Ricci symmetric \cite{SJ06}, (iv) generalized Roter type (\cite{SKgrt}, \cite{SK16}), (v) $R$-space, $C$-space or $P$-space by Venzi \cite{Venz85}, (vi) harmonic or conformal harmonic, (vii) Weyl-pseudosymmetric or Ricci-pseudosymmetric (\cite{D-1991}, \cite{Desz92}, \cite{DGHS11}, \cite{DHV08}).
\end{rem}
\indent From \eqref{sm} and \eqref{vm}, it is clear that the Vaidya metric reduces to Schwarzschild metric \eqref{sm} if $m(u)$ is a non-zero constant. Hence from Theorem \ref{mainthm}, we can conclude the following about the curvature properties of the Schwarzschild metric \eqref{sm}.
\begin{cor}
The Schwarzschild metric \eqref{sm} possesses the following curvature properties:
\begin{enumerate}[label=(\roman*)]
\item The metric is Ricci flat and hence $R = P = W = C = K$.
\item The metric is harmonic, i.e., $div R = 0$.
\item It is Deszcz pseudosymmetric manifold satisfying $R\cdot R = \frac{m}{r^3}Q(g,R)$.
\item The general form of the compatible tensors for $R$ is given by
$$\left
(\begin{array}{cccc}
 a_{_{11}} & a_{_{12}} & 0 & 0 \\
 a_{_{12}} & a_{_{22}} & 0 & 0 \\
 0 & 0 & a_{_{33}} & a_{_{34}} \\
 0 & 0 & a_{_{34}} & a_{_{44}}
\end{array}
\right),$$
where $a_{_{ij}}$ being arbitrary scalars.
\end{enumerate}
\end{cor}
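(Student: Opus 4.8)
The plan is to read the Corollary off from Theorem~\ref{mainthm} by the specialization $m(u)\equiv c\neq 0$ (a non-zero constant), so that $m'=m''=0$, and to track how each item of the theorem degenerates under this substitution; no fresh computation with the metric is needed beyond the component lists of Section~\ref{com}. For (i), \eqref{S} gives $S_{11}=2m'/r^2=0$, hence $S=0$ and the metric is Ricci flat; since also $\kappa=0$, each of $P=R-\frac{1}{n-1}(\wedge_S)$, $K=R-\frac{1}{n-2}(g\wedge S)$, $W=R-\frac{\kappa}{n(n-1)}\mathfrak G$ and $C=K+\frac{\kappa}{(n-2)(n-1)}\mathfrak G$ collapses onto $R$, which is the asserted chain $R=P=W=C=K$.

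For (ii), a Ricci-flat metric has $\nabla S=0$, so $S$ is (trivially) a Codazzi tensor; by the second contracted Bianchi identity, in the form $(\operatorname{div}R)(Y,Z,W)=(\nabla_Z S)(Y,W)-(\nabla_W S)(Y,Z)$, this forces $\operatorname{div}R=0$, i.e.\ the Schwarzschild metric has harmonic curvature (equivalently, one contracts the components of $\nabla R$ in \eqref{dR} with $m'=m''=0$ against $g^{ij}$). For (iii), Theorem~\ref{mainthm}(iii) reads $R\cdot R-Q(S,R)=\frac{m}{r^3}Q(g,C)$; putting $m'=0$ kills $Q(S,R)$ because $S=0$ and replaces $C$ by $R$ because of (i), leaving $R\cdot R=\frac{m}{r^3}Q(g,R)$, the Deszcz pseudosymmetry condition with function $L_R=m/r^3$. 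That $R\cdot R\not\equiv 0$, so that this is a genuine pseudosymmetry recovering the known fact (cf.~\cite{1991-DVV}) that Schwarzschild is pseudosymmetric in the sense of Deszcz, follows from \eqref{R.R}, e.g.\ $R\cdot R_{121323}=3m^2 r^{-4}\neq 0$.

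Finally, for (iv), in the general form of the $R$-compatible tensors displayed in Theorem~\ref{mainthm}(vii) the only place where $m'$ enters is the $(2,1)$-entry $a_{12}+\frac{rm'}{m}a_{22}$, which becomes $a_{12}$ when $m'=0$; moreover with $m'=0$ the non-vanishing components of $R$ retain exactly the structure of those of $C=K=P=R$, so the linear system defining $R$-compatibility coincides with the one solved in Theorem~\ref{mainthm}(viii), and the symmetric block-diagonal form stated in the Corollary results. The only step that is not purely formal is (ii): one must fix the index/sign convention so that $\operatorname{div}R$ is genuinely expressed through $\nabla S$ (so that Ricci-flatness suffices), after which nothing remains; the alternative direct contraction of \eqref{dR} is routine but must use that $g^{12}\neq 0$.
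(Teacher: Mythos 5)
Your proposal is correct and follows essentially the same route as the paper, which obtains the Corollary simply by specializing Theorem~\ref{mainthm} and the component tables of Section~\ref{com} to the case $m(u)=\mathrm{const}\neq 0$, i.e.\ $m'=m''=0$. Your extra justification of item (ii) via the contracted second Bianchi identity is a harmless elaboration of the standard fact (Ricci flat $\Rightarrow$ harmonic) that the paper leaves implicit.
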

\begin{rem}
From \eqref{emt}, we see that for Schwarzschild metric \eqref{sm}, $T = 0$ and hence the Schwarzschild metric is a vacuum solution of Einstein's field equations.
\end{rem}
\section{\bf Comparisons between Vaidya metric and Ludwig-Edgar pure radiation metric}\label{compar}
In 1997 Ludwig and Edgar \cite{LE97} presented a pure radiation metric which is conformally related to vacuum space or Ricci flat space. The pure radiation metric of Ludwig and Edgar (\cite{LE97}, \cite{SKAA17}) in $(u; r; x; y)$-coordinates ($r>0$ and $x>0$) is given by
\be\label{prm}
ds^2 = \left(x w - p^2 \frac{r^2}{x^2}\right)du^2 + 2 du dr - \frac{4r}{x} du dx - \frac{1}{p^2}(dx^2 + dy^2),
\ee
where $w$ is a smooth function of $u, x, y$, and $p$ is a non-zero constant.\\
\indent Recently, Shaikh et al. \cite{SKAA17} have studied the curvature properties of the Ludwig-Edgar pure radiation metric \eqref{prm}. Since both the Vaidya metric \eqref{vm} and Ludwig-Edgar metric \eqref{prm} represent pure radiation fields, in this section we are mainly interested to make a comparisons between the geometric properties of Vaidya metric \eqref{vm} and Ludwig-Edgar pure radiation metric \eqref{prm}.\\
\newline
%
\textbf{A. Similarities:}\\
(i) Ricci tensors of both are neither Codazzi type nor cyclic parallel but the scalar curvatures are zero.\\
(ii) Both are Ricci simple.\\
(iii) Ricci tensors of both the metrics are Riemann compatible as well as Weyl compatible.\\
(iv) conformal curvature 2-forms of both the metrics are recurrent.\\
\textbf{B. Dissimilarities:}\\
(i) Ludwig-Edgar pure radiation metric is semisymmetric whereas Vaidya metric is of pseudosymmetric Weyl conformal curvature tensor.\\
(ii) Ludwig-Edgar pure radiation metric is weakly Ricci symmetric but Vaidya metric is not weakly Ricci symmetric.\\
(iii) Ludwig-Edgar pure radiation metric is $R$-space by Venzi but Vaidya metric is not $R$-space by Venzi.\\
(iv) Energy momentum tensor of Ludwig-Edgar pure radiation metric is is parallel but energy momentum tensor of Vaidya metric is not parallel.
\section*{\bf Acknowledgment}
The authors are grateful to the referees for the remarks and comments that helped to improve the paper. All the algebraic computations of Section $3$ are performed by a program in Wolfram Mathematica.



\begin{thebibliography}{99}\baselineskip=16pt
\bibitem{AD83}
Adam\'{o}w, A. and Deszcz, R.,
\emph{On totally umbilical submanifolds of some class of Riemannian manifolds},
Demonstratio Math.,
\textbf{16} (1983), 39--59.

\bibitem{ADEHM14}
Arslan, K., Deszcz, R., Ezenta\c{s}, R., Hotlo\'s, M. and Murathan, C., \emph{On generalized Robertson--Walker spacetimes satisfying some curvature condition}, Turkish J. Math., \textbf{38(2)} (2014), 353--373.

\bibitem{Bess87}
Besse, A. L.,
\emph{Einstein Manifolds},
Springer-Verlag, Berlin, Heidelberg, \textbf{1987}.

\bibitem{Birk23}
Birkhoff, G. D., \emph{Relativity and Modern Physics}, Harvard University Press, Cambridge, Massachusetts, \textbf{1923}.

\bibitem{Cart26}
Cartan, $\acute{\mbox{E}}$.: 
{\em Sur une classe remarquable d$\acute{\mbox{e}}$spaces de Riemann, I}, 
Bull. de la Soc. Math. de France, 
\textbf{54} (1926), 214--216.

\bibitem{Cart27}
Cartan, $\acute{\mbox{E}}$., 
{\em Sur une classe remarquable d$\acute{\mbox{e}}$spaces de Riemann, II}, 
Bull. de la Soc. Math. de France, 
\textbf{55} (1927), 114--134.

\bibitem{Cart46}
Cartan, $\acute{\mbox{E}}$.,
\emph{Lecons sur la geometrie des espaces de Riemann}, 
2nd ed., Gauthier Villars, Paris, \textbf{1946}.

\bibitem{Chak87}
Chaki, M. C.,
{\em On pseudosymmetric manifolds}, 
An. \c{S}tiin\c{t}. Univ. Al. I. Cuza Ia\c{s}i. Mat. (N.S.), 
{\bf 33} (1987), 53--58.

\bibitem{Chen17} 
Chen, B.-Y., \emph{Differential Geometry of Warped Product Manifolds and Submanifolds}, World Scientific, \textbf{2017}.

\bibitem{Chen17-KJM} 
Chen, B.-Y., \emph{Classification of torqued vector fields and its applications to Ricci solitons}, 
Kragujevac J. Math., \textbf{41} (2017), 239--250.

\bibitem{DDHKS00}
Defever, F.,  Deszcz, R.,  Hotlo$\acute{\mbox{s}}$, M.,  Kucharski, M.  and  \c{S}ent\"{u}rk, Z.,  \emph{Generalisations of Robertson-Walker spaces}, Ann. Univ. Sci. Budapest, E$\ddot{\mbox{o}}$tv$\ddot{\mbox{o}}$s Sect. Math., {\bf 43} (2000), 13--24.

\bibitem{DS83}
Derdzinski, A. and Shen, C. L., \emph{Codazzi tensor fields, curvature and Pontryagin forms}, Proc. London Math. Soc., \textbf{47(3)} (1983), 15--26.

\bibitem{D-1991}
Deszcz, R., \emph{On four-dimensional warped product manifolds satisfying certain pseudosymmetry curvature conditions}, Colloq. Math., \textbf{62} (1991), 103--120.

\bibitem{Desz92}
Deszcz, R., \emph{On pseudosymmetric spaces},
Bull. Belg. Math. Soc., Ser. A,
\textbf{44} (1992), 1--34.

\bibitem{DG02}
Deszcz, R. and G\l ogowska, M.,
\emph{Some examples of nonsemisymmetric Ricci-semisymmetric hypersurfaces},
Colloq. Math., \textbf{94} (2002), 87--101.

\bibitem{DGHS98}
Deszcz, R., G\l ogowska, M., Hotlo\'{s}, M. and \c{S}ent\"{u}rk, Z.,
\emph{On certain quasi-Einstein semi-symmetric hypersurfaces},
Ann. Univ. Sci. Budapest E\"{o}tv\"{o}s Sect. Math.,
\textbf{41} (1998), 151--164.

\bibitem{DGHS11}
Deszcz, R., G\l ogowska, M, Hotlo\'s, M and Sawicz, K.,
\emph{A Survey on Generalized Einstein Metric Conditions},
Advances in Lorentzian Geometry, Proceedings of the Lorentzian Geometry Conference
in Berlin, AMS/IP Studies in Advanced Mathematics, \textbf{49}, S.-T. Yau (series ed.),
M. Plaue, A.D. Rendall and M. Scherfner (eds.), 2011, 27-46.

\bibitem{DGJPZ13}
Deszcz, R., G\l ogowska, M., Je\l owicki, L., Petrovi\'{c}-Torga\u{s}ev, M. and Zafindratafa, G.,
\emph{On Riemann and Weyl compatible tensors},
Publ. Inst. Math. (Beograd) (N.S.),
\textbf{94(108)} (2013), 111--124.

\bibitem{DGJZ-2016}
Deszcz, R., G{\l}ogowska, M., Je{\l}owicki J. and Zafindratafa, G., 
\emph{Curvature properties of some class of warped product manifolds}, Int. J. Geom. Meth. Modern Phys., \textbf{13} (2016), 1550135 (36 pages).

\bibitem{DGr87}
Deszcz, R. and Grycak, W.,
{\em On some class of warped product manifolds}, 
Bull. Inst. Math. Acad. Sinica, 
{\bf 15} (1987), 311--322.

\bibitem{DHV08} 
Deszcz, R., Haesen, S. and Verstraelen, L., 
\emph{On natural symmetries}, 
Topics in Differential Geometry, Ch. 6, Editors A. Mihai, I. Mihai and R. Miron, Editura Academiei Rom\^ane, 2008,
249--308.

\bibitem{DH03}
Deszcz, R. and Hotlo\'{s}, M.,
\emph{On hypersurfaces with type number two in spaces of constant curvature},
Ann. Univ. Sci. Budapest E\"{o}tv\"{o}s Sect. Math.,
\textbf{46} (2003), 19--34.

\bibitem{DHJKS14}
Deszcz, R., Hotlo\'{s}, M., Je\l owicki, J., Kundu, H. and Shaikh, A. A.,
\emph{Curvature properties of G\"{o}del metric},
Int J. Geom. Method Mod. Phy., \textbf{11(3)} (2014), 1450025 (20 pages).

\bibitem{1991-DVV} 
Deszcz, R., Verstraelen L. and Vrancken, L., \emph{The symmetry of warped product spacetimes},
Gen. Relativ. Grav., \textbf{23} (1991), 671-681.

\bibitem{Du79}
Dubey, R. S. D.:
{Generalized recurrent spaces}, 
Indian J. Pure Appl. Math., 
{\bf 10}, 1508--1513 (1979).

\bibitem{Glog02}
G\l ogowska, M.,
\emph{Semi-Riemannian manifolds whose Weyl tensor is a Kulkarni-Nomizu square},
Publ. Inst. Math. (Beograd) (N.S.),
\textbf{72(86)} (2002), 95--106.

\bibitem{Gray78}
Gray, A., \emph{Einstein-like manifolds which are not Einstein}, Geom. Dedicta, \textbf{7} (1978), 259--280.

\bibitem{GP09}
Griffiths, J. B. and Podolsk\'y, J., \emph{Exact space-times in Einstein's general relativity}, Cambridge University Press, \textbf{2009}.

\bibitem{HV04}
Haesen, S. and Verstraelen, L., \emph{Classification of the pseudosymmetric space times}, J. Math. Physics, \textbf{45} (2004), 2343--2346.

\bibitem{HV07} 
Haesen, S. and Verstraelen, L., \emph{Properties of a scalar curvature invariant depending on two planes}, Manuscripta Math., \textbf{122} (2007), 59--72.

\bibitem{HV09} 
Haesen, S. and Verstraelen, L. \emph{Natural intrinsic geometrical symmetries}, SIGMA, \textbf{5} (2009), 086 (14 pages).

\bibitem{Ishi57}
Ishii, Y., \emph{On conharmonic transformations}, Tensor (N. S.), \textbf{11} (1957), 73--80.

\bibitem{Kaig63}
Kaigorodov, V. R., \emph{Einstein Spaces of Maximum Mobility}, Soviet Phys. Doklady, \textbf{7} (1963) 893--895.

\bibitem{Kowa06}
Kowalczyk, D., \emph{On the Reissner-Nordstr\"{o}m-de Sitter type spacetimes}, Tsukuba J. Math., \textbf{30(2)} (2006), 363--381.

\bibitem{LR89}
Lovelock, D. and Rund, H., \emph{Tensors, differential forms and variational principles}, Courier Dover Publications, \textbf{1989}.

\bibitem{LE97}
Ludwig, G. and Edgar, S. B., \emph{Conformally Ricci flat pure radiation metrics}, Class. Quan. Grav., \textbf{14(12)} (1997), 3453--3473.

\bibitem{MM12a}
Mantica, C. A. and Molinari, L. G., \emph{Extended Derdzinski-Shen theorem for curvature tensors}, Colloq. Math., \textbf{128} (2012), 1--6.

\bibitem{MM12b}
Mantica, C. A. and Molinari, L. G., \emph{Riemann compatible tensors}, Colloq. Math., \textbf{128} (2012), 197--210.
\bibitem{MM13}
Mantica, C. A. and Molinari, L. G., \emph{Weyl compatible tensors}, Int. J. Geom. Meth. Mod. Phys., \textbf{11(08)} (2014), 1450070, (20 pages).

\bibitem{MS12a}
Mantica, C. A. and Suh, Y. J., \emph{The closedness of some generalized curvature 2-forms on a Riemannian manifold I}, Publ. Math. Debrecen, {\bf{81/3-4}} (2012), 313--326.

\bibitem{MS13a}
Mantica, C. A. and Suh, Y. J., \emph{The closedness of some generalized curvature 2-forms on a Riemannian manifold II}, Publ. Math. Debrecen, {\bf{82/1}} (2013), 163--182.
 
\bibitem{MS14}
Mantica, C. A. and Suh, Y. J., \emph{Recurrent conformal 2-forms on pseudo-Riemannian manifolds}, Int. J. Geom. Meth. Mod. Phy., \textbf{11(6)} (2014), 1450056 (29 pages).


\bibitem{MIKES76} 
Mike\v{s}, J.,
{\em Geodesic mappings of symmetric Riemannian spaces}, Odessk. Univ., \textbf{3924-76}, 1--10 (1976).

\bibitem{Mike88} 
Mike\v{s}, J.,
{\em Geodesic mappings of special Riemannian spaces}, Topics in diff. geometry, Pap. Colloq., Hajduszoboszl\'o/Hung. 1984, Vol. 2, Colloq. Math. Soc. J.~Bolyai 46, North-Holland, Amsterdam, 793--813 (1988).
\bibitem{Mike92} 
Mike\v{s}, J.,
{\em On geodesic mappings of $m$-symmetric and generally semi-symmetric spaces}, Russ. Math., \textbf{36:8}, 38--42 (1992).
\bibitem{MIKES96} 
Mike\v{s}, J.,
{\em Geodesic mapping of affine-connected and Riemannian spaces}, J. Math. Sci., \textbf{78(3)}, 311--333 (1996). 
\bibitem{MSV15} 
Mike\v{s}, J.,  Stepanova, E. and Van\v{z}urov\'{a}, A.,
{\em Differential geometry of special mappings}, Palacky Univ. Press, Olomouc, \textbf{2015}.
\bibitem{MVH09}
Mike\v{s}, J., Van\v{z}urov\'{a}, A. and Hinterleitner, I,
{\em Geodesic mappings and some generalizations}, Palacky Univ. Press, Olomouc, \textbf{2009}.

\bibitem{Ru46}
Ruse, H. S., 
{\em On simply harmonic spaces}, 
J. London Math. Soc., 
{\bf 21} (1946), 243--247.

\bibitem{Ru49}
Ruse, H. S.,
{\em On simply harmonic `kappa spaces' of four dimensions}, 
Proc. London Math. Soc., 
{\bf 50} (1949), 317--329.

\bibitem{Ru49a}
Ruse, H. S., 
{\em Three dimensional spaces of recurrent curvature}, 
Proc. London Math. Soc., 
{\bf 50} (1949), 438--446.

\bibitem{SAA17}
Shaikh, A.A., Ali, M. and Ahsan, Z.,
\emph{Curvature properties of Robinson-Trautman metric}, J. Geom.,  \textbf{109(38)} (2018),
DOI: 10.1007/s00022-018-0443-1.

\bibitem{SAR13}
Shaikh, A. A., Al-Solamy, F. R. and Roy, I.,
{\em On the existence of a new class of semi-Riemannian manifolds}, 
Mathematical Sciences, 
\textbf{7:46} (2013), 1--13.
\bibitem{SBK17}
Shaikh, A. A., Binh, T. Q. and Kundu, H.,
\emph{Curvature properties of generalized pp-wave metric},
Kragujevac J. Math., 43(3) (2019), 471–493.

\bibitem{SDHJK15}
Shaikh, A. A., Deszcz, R., Hotlo\'{s}, M., Je\l owicki, J. and Kundu, H.,
\emph{On pseudosymmetric manifolds},
Publ. Math. Debrecen,
\textbf{86(3-4)} (2015), 433-456.

\bibitem{SJ06}
Shaikh, A. A. and Jana S.K., 
\emph{On weakly cyclic Ricci symmetric manifolds}, 
Ann. Pol. Math., 
\textbf{89(3)} (2006), 139--146.

\bibitem{SKH11}
Shaikh, A. A., Kim, Y. H. and Hui, S. K., \emph{On Lorentzian quasi-Einstein manifolds}, J. Korean Math. Soc., 48(4) (2011), 669-689.

\bibitem{SK12} 
Shaikh, A. A. and Kundu, H., \emph{On weakly symmetric and weakly Ricci symmetric warped product manifolds}, Publ. Math. Debrecen, \textbf{81(3-4)} (2012), 487--505.

\bibitem{SK14}
Shaikh, A. A. and Kundu, H., \emph{On equivalency of various geometric structures}, J. Geom., \textbf{105} (2014), 139--165.

\bibitem{SKgrt}
Shaikh, A. A. and Kundu, H., \emph{On generlized Roter type manifolds}, Kragujevac J. Math., \textbf{43(3)} (2019), 471--493. 

\bibitem{SK16}
Shaikh, A. A. and Kundu, H., \emph{On warped product generalized Roter type manifolds}, Balkan J. Geom. Appl., \textbf{21(2)} (2016), 82--95.
\bibitem{SK16srs}
Shaikh, A. A. and Kundu, H., \emph{On curvature properties of Som-Raychaudhuri spacetime}, J. Geom., \textbf{108(2)} (2016), 501--515.
\bibitem{SKppsn}
Shaikh, A. A. and Kundu, H., 
\emph{On some curvature restricted geometric structures for projective curvature tensor}, 
Int. J. Geom. Method Mod. Phy., \textbf{15} (2018) 1850157 (38 pages).
\bibitem{SKppsnw}
Shaikh, A. A. and Kundu, H., \emph{On warped product manifolds satisfying some pseudosymmetric type conditions}, Diff. Geom. - Dyn. Sys., \textbf{19} (2017), 119--135.

\bibitem{SKA16}
Shaikh, A. A., Kundu, H. and Ali, Md. S., \emph{On warped product super generalized recurrent manifolds}, An. \c{S}tiin\c{t}. Univ. Al. I. Cuza Ia\c{s}i Mat. (N. S.), LXIV(1) (2018), 85--99.

\bibitem{SKAA17}
Shaikh, A. A., Kundu, H., Ali, M. and Ahsan, Z., \emph{Curvature properties of a special type of pure radiation metrics}, J. Geom. Phys., 136 (2019), 195--206.

\bibitem{SP10}
Shaikh, A. A. and Patra, A., \emph{On a generalized class of recurrent manifolds}, 
Arch. Math. (BRNO), {\bf 46} (2010), 71--78.

\bibitem{SR10}
Shaikh, A. A. and Roy, I,
{\em On quasi generalized recurrent manifolds}, 
Math. Pannonica, 
\textbf{21(2)} (2010), 251--263.

\bibitem{SR11}
Shaikh, A. A. and Roy, I., \emph{On weakly generalized recurrent manifolds}, Ann. Univ. Sci. Budapest, E$\ddot{\mbox{o}}$tv$\ddot{\mbox{o}}$s Sect. Math., \textbf{54} (2011) 35--45.

\bibitem{SRK15}
Shaikh, A. A., Roy, I. and Kundu, H.,
\emph{On the existence of a generalized class of recurrent manifolds}, To appear in An. \c{S}tiin\c{t}. Univ. Al. I. Cuza Ia\c{s}i Mat. (N. S.)., LXIV(2) (2018), 233--251.

\bibitem{SRK16}
Shaikh, A. A., Roy, I. and Kundu, H., \emph{On some generalized recurrent manifolds}, To appear in Bull. Iranian Math. Soc.,  \textbf{43(5)} (2017).

\bibitem{Siny54}
Sinyukov, N. S.,
{On geodesic mappings of Riemannian manifolds onto symmetric spaces}, Dokl. Akad. Nauk SSSR, \textbf{98}, 21--23 (1954).
\bibitem{Siny79}
Sinyukov, N. S., 
{Geodesic mappings of Riemannian spaces}, Nauka, Moscow, (1979).

\bibitem{SKP03}
Suh, Y. J., Kwon, J-H. and Pyo, Y. S., \emph{On semi-Riemannian manifolds satisfying the second Bianchi identity}, J. of Korean Math. Soc., \textbf{40(1)} (2003), 129--167.

\bibitem{Szab82}
Szab$\acute{\mbox{o}}$, Z. I.,
{Structure theorems on Riemannian spaces satisfying $R(X, Y).R = 0$, I, The local version}, 
J. Diff. Geom., 
\textbf{17} (1982), 531--582.

\bibitem{Szab84}
Szab$\acute{\mbox{o}}$, Z. I., 
{\em Classification and construction of complete hypersurfaces satisfying $R(X, Y).R = 0$}, 
Acta Sci. Math., 
\textbf{47} (1984), 321--348.

\bibitem{Szab85}
Szab$\acute{\mbox{o}}$, Z. I.,
{\em Structure theorems on Riemannian spaces satisfying $R(X, Y).R = 0$, II, The global version},  
Geom. Dedicata, 
\textbf{19} (1985), 65--108.

\bibitem{Tach74}
Tachibana, S.,
\emph{A Theorem on Riemannian manifolds of positive curvature operator},
Proc. Japan Acad.,
\textbf{50} (1974), 301--302.

\bibitem{TB89}
T$\acute{\mbox{a}}$massy, L. and Binh, T.Q., \emph{On weakly symmetric and weakly projective symmetric Riemannian manifolds}, Coll. Math. Soc. J. Bolyai, \textbf{50} (1989), 663--670.

\bibitem{TB93}
Tam\'{a}ssy, L. and  Binh, T. Q., \emph{On weak symmetries of  Einstein and Sasakian manifolds}, Tensor (N. S.), \textbf{53} (1993), 140--148.

\bibitem{Vaid43}
Vaidya, P. C., \emph{The external field of a radiating star in general relativity}, current science, \textbf{12} (1943), 183.

\bibitem{LV94}
Verstraelen, L., \emph{Comments on the pseudo-symmetry in the sense of Ryszard Deszcz}, in: Dillen e.a.(eds.) Geometry and Topology of Submanifolds, \textbf{VI}, World Sci., Singapore, 1994, 119--209.

\bibitem{LV16} 
Verstraelen, L., \emph{Natural extrinsic geometrical symmetries - an introduction}, in: Recent Advances in the Geometry of Submanifolds Dedicated to the Memory of Franki Dillen (1963-2013), Contemporary Mathematics, \textbf{674} (2016), 5--16.
\bibitem{LV17} 
Verstraelen, L., \emph{Foreword}, in: B.-Y. Chen, Differential Geometry of Warped Product Manifolds and Submanifolds, World Scientific, 2017, vii--xxi.

\bibitem{Venz85} 
Venzi, P., 
\emph{Una generalizzazione degli spazi ricorrenti}, 
Revue Roumaine de Math. Pure at appl.,
\textbf{30} (1985), 295--305.

\bibitem{Wa50}
Walker, A. G.,
{\em On Ruse's spaces of recurrent curvature}, 
Proc. London Math. Soc., 
\textbf{52} (1950), 36--64.

\bibitem{YK89}
Yano, K. and Kon, M.,  \emph{Structures on  manifolds}, World Scientific Publ., Singapore, \textbf{1989}.

\end{thebibliography}
\end{document}